\documentclass[reqno]{amsart}
\usepackage{amsmath,amsfonts,amsthm,amssymb,stmaryrd,color}
\usepackage[normalem]{ulem} 
\usepackage[final]{showkeys} 
\usepackage[colorlinks=true,allcolors=blue]{hyperref}

\newcommand{\anorm}[1]{\| \widehat{#1}\|}
\renewcommand{\leq}{\leqslant}
\renewcommand{\geq}{\geqslant}
\newcommand{\ri}{\mathcal{R}}
\newcommand{\ind}[1]{1_{#1}}

\newcommand{\bbn}{\mathbf{N}}
\newcommand{\bbz}{\mathbf{Z}}
\newcommand{\bbr}{\mathbf{R}}

\newcommand{\bbc}{\mathbf{C}}

\newcommand{\supp}{\mathrm{supp}}
\newcommand{\td}{\,\mathrm{d}}
\newcommand*{\bbe}{
  \mathop{
    \mathchoice{\vcenter{\hbox{\larger[4]$\mathbb{E}$}}}
               {\kern0pt\mathbb{E}}
               {\kern0pt\mathbb{E}}
               {\kern0pt\mathbb{E}}
  }\displaylimits
}

\newcommand{\abs}[1]{\left\lvert #1\right\rvert}
\newcommand{\Abs}[1]{\lvert #1\rvert}
\newcommand{\brac}[1]{\left( #1\right)}

\newcommand{\norm}[1]{\left\lVert #1\right\rVert}

\numberwithin{equation}{section}
\newtheorem{theorem}{Theorem}[section]
\newtheorem{proposition}[theorem]{Proposition}
\newtheorem{lemma}[theorem]{Lemma}

\newtheorem{corollary}[theorem]{Corollary}
\newtheorem{conjecture}[theorem]{Conjecture}
\newtheorem*{question*}{Question}

\theoremstyle{definition}

\newtheorem{question}[theorem]{Question}
\newtheorem*{definition*}{Definition}

\theoremstyle{remark}
\newtheorem*{remark}{Remark}

\begin{document}
\title{Remarks on the inverse Littlewood conjecture}
\author{Thomas F. Bloom}
\address{Department of Mathematics, University of Manchester, Manchester, M13 9PL, UK}
\email{thomas.bloom@manchester.ac.uk}
\author{Ben Green}
\address{Mathematical Institute, Andrew Wiles Building, Radcliffe Observatory Quarter, Woodstock
Rd, Oxford OX2 6QW, UK}
\email{ben.green@maths.ox.ac.uk}
\begin{abstract}
The Littlewood conjecture, proven by Konyagin and McGehee-Pigno-Smith in the 1980s, states that if $A\subset \mathbb{Z}$ is a finite set of integers with $\lvert A\rvert=N$ then $\| \widehat{1_A}\|_1\geq  c\log N$ for some absolute constant $c > 0$. We explore what structure $A$ must have if $\| \widehat{1_A}\|_1\leq K\log N$ for some constant $K$. Under such an assumption we prove, for instance, that $A$ contains a subset $A'\subseteq A$ with $\lvert A'\rvert \geq N^{0.99}$ such that $|A' + A'| \ll K^{O(1)}|A'|$. As a consequence, for any $k\geq 3$, if $N$ is sufficiently large depending on $k$ and $K$, then $A$ must contain an arithmetic progression of length $k$. A byproduct of our analysis is a (slightly) improved bound for the constant $c$.

\end{abstract}
\maketitle

\section{Introduction}
For a finite set $A\subset\bbz$ the Fourier transform $\widehat{\ind{A}}: \bbr/\bbz \to\bbc$ is defined by $\widehat{\ind{A}}(\theta)=\sum_{n\in A}e(-n\theta)$, where $e(x) :=e^{2\pi ix}$. Define
\[\|\widehat{\ind{A}}\|_1 = \int_0^1 \lvert\widehat{\ind{A}}(\theta)\rvert\td \theta.\]
A famous conjecture of Littlewood stated that\footnote{We use the Vinogradov notation $f\gg g$, equivalent to $g=O(f)$, to denote the existence of some absolute constant $c>0$ such that $\abs{f(x)}\geq c\abs{g(x)}$ for all sufficiently large $x$.} $\anorm{\ind{A}}_1 \gg \log N$ for any set $A$ of $N$ integers, which is achieved for example by an arithmetic progression of length $N$. This was proved in 1981 independently by Konyagin \cite{Ko81} and McGehee, Pigno, and Smith \cite{MPS81}. A discussion of the problem and its history may be found in \cite[Chapter 10]{Choiment-Queffelec}. 

This lower bound is universal, yet rarely achieved -- in fact generically we expect $\anorm{\ind{A}}_1$ to be closer to the trivial upper bound $N^{1/2}$, which is an immediate consequence of the Cauchy-Schwarz inequality and Parseval's identity. This is achieved when $A$ is lacunary, for example, or a random positive density subset of an interval (with high probability). In fact, the only examples we know that achieve $O(\log N)$ are arithmetic progressions and simple variations, for example the union of $O(1)$ arithmetic progressions and some small unstructured set.

It is therefore natural, especially from the viewpoint of modern additive combinatorics, to ask the following inverse question.
\begin{question}
Let $K>0$ be some large constant. What can we say about the structure of sets $A\subset \bbz$ with $\abs{A}=N$ and $\anorm{\ind{A}}_1\leq K\log N$?
\end{question}
This question was asked by the second author \cite{Gr14} and one possible precise conjecture in this direction was formulated by Petridis \cite[Question 5.1]{Pe13} (see Section~\ref{sec-conj} for further details). While we are some way from a full resolution of this question, in this paper we offer the following weak partial progress.

Recall that the \emph{additive energy} $E(B)$ of a finite set $B \subset \bbz$ is defined by
\begin{equation}\label{add-energy-def} E(B) = \# \{ (b_1, b_2, b_3, b_4) \in B^4 : b_1 + b_2 = b_3 + b_4\}.\end{equation}
We have
\begin{equation}\label{energy-fourier} E(B) = \int^1_0 |\widehat{1_B}(\theta)|^4 d\theta,\end{equation} as can be checked using the orthogonality relation for characters. We also write $\omega[B] := E(B)/|B|^3$ for the normalised additive energy of $B$, which satisfies $0 < \omega[B] \leq 1$.

\begin{theorem}\label{th-struc}
Let $N$ be a sufficiently large positive integer. Let $\delta \in (0, \frac{1}{2}]$ and $K> 0$. If $A$ is a set of $N$ integers such that $\anorm{\ind{A}}_1\leq K\log N$ then there is a subset $A'\subseteq A$ of size $\abs{A'}\gg N^{1-\delta}$ such that $\omega[A']\gg (\delta/K)^{2}$.
\end{theorem}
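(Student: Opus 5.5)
The plan is to split $A$ into consecutive blocks, use a block-decoupled form of the McGehee--Pigno--Smith lower bound, and then convert low energy of a block into a large $L^1$ norm for that block.

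\textbf{Blocks.} List $A=\{a_1<a_2<\dots<a_N\}$. For $j$ with $(1-\delta)\log_2 N\le j<\log_2 N$, let $B_j$ be the set of $a_n$ with $2^j\le n<2^{j+1}$. These are $\gg\delta\log N$ consecutive blocks, each of size $|B_j|\ge 2^j\gg N^{1-\delta}$, and they lie in increasing order: $\max B_j<\min B_{j+1}$. It suffices to find one $j$ with $\omega[B_j]\gg(\delta/K)^2$, and then take $A'=B_j$.

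\textbf{Energy versus $L^1$ norm.} By Parseval, H\"older ($L^2$ between $L^1$ and $L^4$) and \eqref{energy-fourier},
\[ |B|=\|\widehat{1_B}\|_2^2\le \|\widehat{1_B}\|_1^{2/3}\,E(B)^{1/3}, \qquad\text{so}\qquad \|\widehat{1_B}\|_1\ge \omega[B]^{-1/2}. \]
Suppose every block has $\omega[B_j]\le\eta$. Then $\|\widehat{1_{B_j}}\|_1\ge\eta^{-1/2}$ for every $j$ in the range.

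\textbf{Decoupling inequality.} The key input is an MPS-type estimate for functions whose spectrum splits into increasing consecutive blocks:
\begin{equation*}
\|\widehat{1_A}\|_1\ \gg\ \sum_j \min\bigl(\|\widehat{1_{B_j}}\|_1,\ \eta_0^{-1/2}\bigr)
\end{equation*}
for a suitable truncation level $\eta_0$, or some comparable variant. The classical inequality is the case where each block is a singleton: it gives $\|\sum c_n e(a_n\theta)\|_1\gg\sum|c_n|/n$. The expected proof follows McGehee--Pigno--Smith.
\begin{itemize}
\item For each block, take a unimodular phase function $\phi_j$ attaining $\langle \widehat{1_{B_j}},\phi_j\rangle=\|\widehat{1_{B_j}}\|_1$, and smooth it so its spectrum is essentially localized.
\item Build a test function $g=\sum_j \phi_j\exp(-\sum_{k>j}\epsilon(|\phi_k'|+iH|\phi_k'|))$. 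Here $H$ is the conjugate function, and the exponential factors damp the earlier pieces so that $\|g\|_\infty\ll 1$.
\item The one-sided spectral support of $\exp(-(u+iHu))$ makes the cross terms $\langle\widehat{1_{B_k}},\cdot\rangle$ for $k\ne j$ small or of favourable sign.
\end{itemize}
Granting the inequality with $\eta_0$ below $\eta$, we get $K\log N\ge\|\widehat{1_A}\|_1\gg\delta\log N\cdot\eta^{-1/2}$. Hence $\eta\gg(\delta/K)^2$, which is the required conclusion.

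\textbf{Main obstacle.} The hard step is proving the decoupling inequality with the per-block $L^1$ norms genuinely appearing, rather than only something like $\sum|c_n|/n$, which each block would contribute only $O(1)$ to. I would first try to run the MPS construction with the block phases in place of the single frequencies. The delicate point is controlling the error terms: these come from the smoothing of $\phi_j$ (one needs the block spectrum of $\phi_j$ to sit below the next block, using the ordering of the blocks) and from the loss in replacing $\phi_j$ by its damped version.

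If the full $L^1$ norm cannot be recovered, the fallback is to use the Hölder bound inside the construction. That is, take $\phi_j=\widehat{1_{B_j}}/\|\widehat{1_{B_j}}\|_\ast$ suitably normalized in $L^2$ and $L^4$, so that only $|B_j|$ and $E(B_j)$ enter the estimates. This would give directly $\|\widehat{1_A}\|_1\gg\sum_j\min(1,\cdot)\,\omega[B_j]^{-1/2}$-type bounds, which would suffice for the pigeonholing above.
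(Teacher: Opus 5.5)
There is a genuine gap. Your whole argument rests on the decoupling inequality $\|\widehat{1_A}\|_1\gg\sum_j\min(\|\widehat{1_{B_j}}\|_1,\eta_0^{-1/2})$. You do not prove it, and it is false at the truncation level your application needs.

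To contradict $\|\widehat{1_A}\|_1\le K\log N$ with only $\asymp\delta\log N$ blocks, you need $\eta_0^{-1/2}\ge CK/\delta$, with $C$ large compared to the implied constant. Now take $A=\{1,\dots,N\}$, which satisfies the hypothesis for every $K\ge 1$. Each dyadic block is a progression of length $\ge N^{1/2}$, so $\|\widehat{1_{B_j}}\|_1\asymp\log N$, far above the truncation level. Your inequality would then give $\|\widehat{1_A}\|_1>K\log N$, whereas the true value is $\frac{4}{\pi^2}\log N+O(1)$.

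The $L^1$ norms of consecutive blocks simply do not add. Note that MPS is not the singleton case of your inequality: that case would read $\|\sum_n c_n e(a_n\theta)\|_1\gg\sum_n\min(|c_n|,T)$, which is false already for $c_n\equiv 1$. The weights $1/n$ in MPS say precisely that such arguments extract only $O(1)$ per dyadic scale.

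Your fallback, a single test-function piece per block worth $\omega[B_j]^{-1/2}$, is not specified either. You would need $g_j$ with $\|\widehat{g_j}\|_\infty\le 1$, main term $\asymp\omega[B_j]^{-1/2}$, and cross terms $\langle 1_A, g_i\ast F_{ij}\rangle$ that remain summable. Nothing in the proposal says how those cross terms would be controlled.

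The missing idea is that energy should enter through the cross terms. This allows many more pieces, each worth only $O(1)$, instead of making each piece worth more.

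\begin{itemize}
\item The paper takes the normalised indicators $\mu_j=1_{A_j}/|A_j|$ of nested \emph{initial segments} with $|A_j|\le(1-\eta)|A_{j+1}|$ and $|A_1|\approx N^{1-\delta}$. This gives $\asymp\eta^{-1}$ pieces per dyadic scale, so $J\gg\eta^{-1}\delta\log N$.
\item In the test function of Lemma~\ref{lem-test}, each $\mu_j$ contributes $1-e^{-b}$ to $\langle\mathcal{R},1_A\rangle$.
\item Because $F_{ij}$ is supported on $\mathbf{Z}_{\le 0}$ and $A_i$ is an initial segment, the cross term equals $\langle 1_{A_i}\ast\mu_i^{\circ},F_{ij}\rangle$. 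Cauchy--Schwarz bounds this by $\ll(\omega[A_i]+|A_i|^{-1})^{1/2}(|A_i|/|A_j|)^{1/2}$.
\item The geometric sum over $j>i$ costs only a factor $\eta^{-1}$, so $\|\widehat{1_A}\|_1\gg J-C\eta^{-1}\sum_i\omega[A_i]^{1/2}$.
\item Set $\eta=c\delta/K$. If every $\omega[A_i]\le c'\eta^2$, this gives $\|\widehat{1_A}\|_1\gg\eta^{-1}\delta\log N>K\log N$, a contradiction.
\end{itemize}

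It is also the initial-segment structure that turns the one-sided support of $F_{ij}$ into $E(A_i)$. With your disjoint blocks $B_i$ you would instead get a mixed energy of $B_i$ against all of $A\cap(-\infty,\max B_i]$.
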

The set $A'$ produced in Theorem~\ref{th-struc} is in fact an initial segment of $A$ .

The following two corollaries are almost immediate consequences of this and standard results in additive combinatorics.

\begin{corollary} \label{cor1}Let $N$ be a sufficiently large positive integer. Let $K > 0$ and suppose that $A\subset \bbz$ is a finite set of size $N$ such that $\anorm{\ind{A}}_1\leq K\log N$. Then there exists an arithmetic progression $P$ of length $\geq N^{c_K}$ such that $\abs{A\cap P}\geq c_K \abs{P}$, where $c_K > 0$ depends only on $K$.
\end{corollary}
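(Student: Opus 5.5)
Apply Theorem~\ref{th-struc} with $\delta=\frac12$ to obtain $A'\subseteq A$ with $|A'|\gg N^{1/2}$ and $\omega[A']\gg K^{-2}$, so that $E(A')\gg K^{-2}|A'|^3$. The rest of the argument takes place inside $A'$ and uses only standard additive-combinatorial results, with every loss polynomial in $K$.

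First, apply the Balog--Szemer\'edi--Gowers theorem to $A'$. This yields $A''\subseteq A'$ with $|A''|\gg K^{-O(1)}|A'|$ and $|A''+A''|\ll K^{O(1)}|A''|$. Next, apply Freiman's theorem in $\bbz$ to $A''$, say in the form with doubling $L=K^{O(1)}$. Then $A''$ is contained in a proper generalised arithmetic progression
\[Q=\{a_0+x_1d_1+\dots+x_rd_r: 0\leq x_i<L_i\},\]
with rank $r\leq r(L)$ and $|Q|\leq C(L)|A''|$, where $r(L)$ and $C(L)$ depend only on $K$. In particular $|A\cap Q|\geq |A''|\geq C(L)^{-1}|Q|$.

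Then pass from $Q$ to a one-dimensional progression. The size satisfies $|Q|\geq|A''|\gg K^{-O(1)}N^{1/2}$, so the longest side, say $L_1$, satisfies $L_1\geq |Q|^{1/r}\gg_K N^{1/(2r)}$. Partition $Q$ into the $\prod_{i\geq 2}L_i$ arithmetic progressions obtained by fixing $x_2,\dots,x_r$. Each has common difference $d_1$ and length $L_1$, and these are genuine progressions since $Q$ is proper. By averaging, one of them, call it $P$, satisfies $|A\cap P|\geq C(L)^{-1}|P|$, and its length is $L_1\geq N^{c_K}$ for $N$ large in terms of $K$. Taking $c_K$ to be the minimum of $C(L)^{-1}$ and a suitable exponent such as $1/(3r(L))$ gives the corollary.

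There is no real obstacle here: the key input is Theorem~\ref{th-struc}, which we take as given. The only points needing care are, first, tracking that all losses from Balog--Szemer\'edi--Gowers and Freiman depend only on $K$, and second, ensuring the longest side of $Q$ is polynomially large in $N$. The latter is exactly why the polynomial lower bound $|A'|\geq N^{1-\delta}$ in Theorem~\ref{th-struc} matters: a size bound like $N/\exp(\ldots)$ would not suffice after the $r$-th root. If the constants from Freiman's theorem are too weak, an alternative is to replace the last two steps with a Bogolyubov--Ruzsa lemma argument, which gives a long progression directly.
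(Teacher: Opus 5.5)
Your proposal is correct and follows the paper's proof essentially step for step: Theorem~\ref{th-struc} with $\delta=\frac12$, then Balog--Szemer\'edi--Gowers, then Freiman's theorem with a proper progression of rank $O_K(1)$, then averaging over translates along one direction. Your choice of the longest side $L_1\geq |Q|^{1/r}$ and the decomposition of $Q$ into disjoint one-dimensional fibres simply spell out the averaging step that the paper compresses into one line.
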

\begin{proof} Apply Theorem \ref{th-struc} with $\delta = \frac{1}{2}$. Applying the Balog-Szemer\'{e}di-Gowers theorem (see, for example, \cite[Theorem 2.31]{TaVu06}) to the resulting set $A'$, we obtain a set $A'' \subseteq A$, $|A''| \gg K^{-O(1)} N^{1/2}$, with  $\abs{A''+A''}\leq K^{O(1)} \lvert A''\rvert$. By the Freiman-Ruzsa theorem (see, for example \cite[Theorem 5.33]{TaVu06}), there are arithmetic progressions $P_1,\ldots,P_r$ with $r = O_K(1)$ such that $A'' \subseteq P_1+\cdots+P_r$ and
\[\abs{P_1+\cdots+P_r}=\abs{P_1}\cdots\abs{P_r}\ll_K \abs{A''}.\]
By averaging there exists some $i$, $1\leq i\leq r$, and $x$ such that 
\[\abs{(P_i+x)\cap A''}\gg_K \abs{P_i}\gg_K \abs{A''}^{1/r}.\]
Since $N$ is large, the length of $P_i$ is bounded below by $N^{c_K}$ for some $c_K > 0$.
\end{proof}

\begin{corollary}\label{cor2}
Let $K>0$ and let $k \geq 3$ be an integer. Let $N$ be sufficiently large in terms of $k,K$ and let $A\subset \bbz$ be a finite set of size $N$ such that $\anorm{\ind{A}}_1\leq K\log N$. Then $A$ contains an arithmetic progression of length $k$.
\end{corollary}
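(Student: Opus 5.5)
Corollary~\ref{cor2} follows from Corollary~\ref{cor1} together with Szemer\'edi's theorem. The point is that Corollary~\ref{cor1} hands us a long progression on which $A$ has positive relative density, with the density depending only on $K$. Szemer\'edi's theorem then finds a $k$-term progression inside that dense piece.

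\textbf{Step 1.} Apply Corollary~\ref{cor1}. Provided $N$ is sufficiently large in terms of $K$, this gives an arithmetic progression
\[P=\{a+jd : 0\leq j<L\},\qquad L\geq N^{c_K},\]
with $\abs{A\cap P}\geq c_K L$.

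\textbf{Step 2.} Transfer to an interval. Set
\[S=\{j\in\{0,\dots,L-1\} : a+jd\in A\}.\]
The map $j\mapsto a+jd$ is an affine injection, so $S\subseteq\{0,\dots,L-1\}$ has $\abs{S}\geq c_K L$. This map also sends $k$-term arithmetic progressions in $S$ to $k$-term arithmetic progressions in $A$. Nondegeneracy is preserved: a progression in $S$ with nonzero common difference $r$ maps to one in $A$ with common difference $rd\neq 0$, since $d\neq 0$.

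\textbf{Step 3.} Invoke Szemer\'edi's theorem in its finitary form. For every $\alpha>0$ and $k\geq 3$ there is $L_0(\alpha,k)$ such that every subset of $\{0,\dots,L-1\}$ of density at least $\alpha$ contains a nontrivial $k$-term progression, as long as $L\geq L_0(\alpha,k)$.

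\textbf{Step 4.} Choose $N$ large. Take $N$ large enough that Corollary~\ref{cor1} applies and that $N^{c_K}\geq L_0(c_K,k)$. Both conditions depend only on $k$ and $K$. Then $L\geq L_0(c_K,k)$, so Step 3 applied with $\alpha=c_K$ gives a nontrivial $k$-term progression in $S$. Its image under the map of Step 2 is a $k$-term arithmetic progression in $A$.

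There is no real obstacle here. The only point needing care is that the quantifiers line up: the density $c_K$ and the length lower bound $N^{c_K}$ must both depend only on $K$, which is exactly what Corollary~\ref{cor1} provides. The resulting threshold for $N$ is ineffective in quality only because of the Szemer\'edi bounds. One could substitute quantitative versions of Szemer\'edi's theorem (e.g.\ Gowers's bounds) to make it explicit.
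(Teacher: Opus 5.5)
Your proposal is correct and is the paper's argument: the paper's proof is the one-line deduction from Corollary~\ref{cor1} and Szemer\'edi's theorem. You have simply written out the transfer from $P$ to an interval and checked that the threshold for $N$ depends only on $k$ and $K$.
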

\begin{proof}
This follows immediately from Corollary \ref{cor1} and Szemer\'edi's theorem.     
\end{proof}

\begin{remark}
The key feature of Theorem \ref{th-struc} is that the lower bound on $E(A')$ is a constant times $|A'|^3$. If one is prepared to tolerate logarithmic losses then a one line application of H\"older's inequality on the Fourier side shows that we in fact have $E(A) \geq N^3/(K\log N)^2$. However, this would certainly be too weak to deduce Corollary \ref{cor2} given our current knowledge of bounds in Szemer\'edi's theorem when $k \geq 4$.
\end{remark}

\subsection{Previous structural results}
We now summarise what was previously known concerning this inverse question for sets with small $\Vert \widehat{1_A} \Vert_1$. 
\begin{itemize}
\item As noted above, it is a trivial consequence of H\"{o}lder's inequality that $A$ itself must have reasonably large additive energy: $E(A)\geq N^3/(K\log N)^2$. 
\item Pichorides \cite[Lemma 3]{Pi80} noted that an argument of Zygmund \cite[Chapter XII, 7.6]{Zy02} implies that the largest dissociated\footnote{A set $X\subset \bbz$ is dissociated if all $2^{\abs{X}}$ subset sums are distinct.} subset of $A$ has size $O_K((\log N)^3)$. This was independently rediscovered by Bedert \cite{Be25}. 
\item Petridis \cite{Pe13} showed that $A$ cannot have genuine 3-dimensional structure, in a certain precise sense.
\item Picking up on the theme of Petridis, Hanson \cite{Ha21} showed that $A$ cannot have genuine 2-dimensional structure, again in a sense he was able to make precise.
\item If we replace the upper bound $\leq K\log N$ with $\leq K$, and replace $\bbz$ with a finite abelian group $G$, then a much more satisfactory answer is known: the second author and Sanders \cite{GrSa08} have shown that if $A\subseteq G$ has $\anorm{\ind{A}}_1\leq K$ then $\ind{A}$ can be written as a simple linear combination of $O_K(1)$ indicator functions of cosets of subgroups of $G$. This condition is (qualitatively) necessary and sufficient.
\end{itemize}
Part of the motivation for inverse results of this nature originated in work of Bourgain \cite{Bo97}, which highlighted a connection between Littlewood's conjecture and the task of improving the lower bounds for the sum-free subset problem in additive combinatorics. Bedert \cite{Be25} has made significant progress on this problem recently, making use of that connection.

\subsection{Bounds for the Littlewood conjecture}
The lower bound $\anorm{1_A}_1\gg \log N$ is the best possible up to an absolute constant, since this is achieved when $A$ is an arithmetic progression of length $N$. In this case, one may compute (see, for example, \cite[II.12.1]{Zy02}) that
\[\anorm{1_A}_1=\frac{4}{\pi^2}\log N+O(1).\]
One may ask (and Hardy and Littlewood did ask \cite[p 167]{HaLi48}) whether this leading constant is the best possible or even whether $\Vert \widehat{1_A} \Vert_1 \geq \Vert \widehat{1_P} \Vert_1$ where $P$ is an arithmetic progression of length $N$. This statement, or the slightly weaker statement that $\Vert \widehat{1_A} \Vert_1 \geq (\frac{4}{\pi^2} - o(1)) \log N$, is usually called the \emph{Strong Littlewood Conjecture}.

Less ambitiously, one can seek to prove the best possible value of $c$ in an inequality
\[\anorm{1_A}_1\geq (c-o(1))\log N.\]
The argument of McGehee, Pigno, and Smith \cite{MPS81} achieved $c=1/30$. The numerics were optimised by Stegeman \cite{St82} and Yabuta \cite{Ya82}, the latter proving a value of $c \approx 0.1295$. We give a slight improvement on this constant.

\begin{theorem}\label{th-const}
If $A\subset \bbz$ is a finite set of size $N$ then
\[\anorm{1_A}_1 \geq (c-o(1))\log N\]
where $c= 0.170934\cdots$.
\end{theorem}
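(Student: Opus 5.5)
The plan is to follow the McGehee--Pigno--Smith construction of a test function and to optimise its parameters. By translation we may assume $A=\{a_1<a_2<\cdots<a_N\}$. We split $A$ into consecutive blocks $A_1,A_2,\dots$ whose sizes grow geometrically, say $|A_j|\approx \lambda^{j}$ for a parameter $\lambda>1$ to be chosen. For each block $j$ we build a bounded function $\Phi_j$ on $\bbr/\bbz$ of the form $\Phi_j = \widehat{1_{A_j}}\,|\widehat{1_{A_j}}|^{-1}$, suitably truncated and damped. The truncation is to the frequency regime where $|\widehat{1_{A_j}}|$ is not small, and the damping is done via the analytic projection of $|\widehat{1_{A_j}}|$-type weights, using factors of the form $e^{-\alpha(u+i\tilde u)}$ with $\tilde u$ the conjugate function. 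The aim is a test function $\Phi=\sum_j \Phi_j\prod_{k>j} e^{-\alpha(\ldots)}$ satisfying
\[
\|\Phi\|_\infty\le C(\alpha,\lambda)\quad\text{and}\quad \mathrm{Re}\,\langle \widehat{1_A},\Phi\rangle \ge (1-\varepsilon)\,\#\{\text{blocks}\}\approx \frac{\log N}{\log\lambda}.
\]
The triangle inequality then gives $\anorm{1_A}_1\ge \langle\widehat{1_A},\Phi\rangle/\|\Phi\|_\infty$.

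The key steps, in order, are as follows.

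(i) We use the fact that multiplying by a function with Fourier support in $[0,\infty)$ (the analytic factor $e^{-\alpha(u+i\tilde u)}$) does not move the frequencies of block $j$ into the frequency range of later blocks. This is the ordering/analyticity mechanism of MPS, and it ensures that the cross terms $\langle \widehat{1_{A_k}},\Phi_j\rangle$ are controlled.

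(ii) We show that the main terms contribute $|A_j|^{-1}\cdot|A_j|$ each, up to errors depending on $\alpha$. The errors from earlier blocks are geometrically summable because of the growth rate $\lambda$.

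(iii) We bound $\|\Phi\|_\infty$ by a telescoping argument. Using $|e^{-\alpha(u+i\tilde u)}|=e^{-\alpha u}$, one gets
\[
|\Phi|\le \sum_j u_j\, e^{-\alpha\sum_{k>j}u_k},
\]
which is bounded by a sum-to-integral comparison $\lesssim \int_0^\infty e^{-\alpha t}\,dt$ plus a correction from the maximal single term.

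(iv) We optimise over $\lambda$, $\alpha$ and the truncation threshold to produce the constant $c$.

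To get beyond Yabuta's $0.1295$, I would not take $\Phi_j$ to be a pure sign function. Instead I would use a smoothed $\Phi_j=\widehat{1_{A_j}}\,F(|\widehat{1_{A_j}}|)$ for a profile $F$ chosen to maximise the ratio in (iii). I would also let $\lambda\to 1^+$, with blocks of slowly growing size, so that the count of blocks approaches $\log N$ times an explicit constant. The number $0.170934\cdots$ should then emerge as the value of a one-dimensional variational problem: maximise $\int F(t)\,t^2$ subject to the sup-norm constraint coming from (iii). I would solve this problem explicitly, by calculus of variations or by numerics for a parametrised family.

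The main obstacle is step (iii) together with the error control in (ii) once the blocks are taken to be nearly equal in size. The MPS error terms are of order $\lambda^{-1}$ per block, which is harmless when $\lambda$ is large but accumulates when $\lambda\to1$. I would first try to absorb these errors by exploiting the $L^2$ orthogonality between the blocks, which bounds the cross terms by $\|\widehat{1_{A_j}}\|_2\|\Phi_k\|_2$-type quantities that sum acceptably. Failing that, I would fix a moderate $\lambda$ and optimise numerically over both $F$ and $\lambda$.
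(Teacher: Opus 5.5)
\textbf{Verdict: genuine gap.} Your proposal does not establish the stated constant.

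Your steps (i)--(iii) describe the McGehee--Pigno--Smith scheme, which is also the paper's starting point. But the statement is a specific numerical constant, and your plan never produces an explicit bound for the error terms, so step (iv) has nothing concrete to optimise. The suggestion that $0.170934\cdots$ is the value of a one-dimensional variational problem in a profile $F$ with $\lambda\to1^+$ is a guess, and it is not where the number comes from.

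Several details also point the wrong way.
\begin{itemize}
\item The building blocks must be $\widehat{1_{A_j}}/|A_j|$, i.e.\ $g_j=1_{A_j}/|A_j|$ on the $\bbz$ side. That is what makes the main term in (ii) equal to $1$. With the phase $\widehat{1_{A_j}}|\widehat{1_{A_j}}|^{-1}$ the main term cannot be evaluated; for $A=A_j$ it is exactly $\anorm{1_{A_j}}_1$, the quantity being bounded.
\item Truncating in $\theta$, or multiplying by $F(|\widehat{1_{A_j}}|)$, convolves the coefficients of $\Phi_j$ with a two-sided sequence, so they are no longer supported in $A_j$. This breaks both the main-term computation and the ordering mechanism in (i).
\item In (iii), a sum-to-integral comparison plus a correction term loses a constant. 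You need the exact inequality $e^{-bx}+(1-e^{-b})x\le 1$ on $[0,1]$, which gives $\|\widehat{\ri}\|_\infty\le1$ with weight $1-e^{-b}$ on every block.
\end{itemize}

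The paper's proof rests on two ingredients that your plan does not contain.

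\emph{Setup.} Take nested initial segments $A_1\subseteq\cdots\subseteq A_J$ (rather than disjoint blocks), with $|A_1|=\lfloor\log N\rfloor$ and $|A_{j+1}|\ge\lambda|A_j|$, and set $g_j=\mu_j=1_{A_j}/|A_j|$. The error from a pair $i<j$ is $c\langle 1_A,\mu_i*F_{ij}\rangle$, where $F_{ij}$ is supported on $\bbz_{\le0}$ and $\|F_{ij}\|_2\le\sqrt2\,b\,|A_j|^{-1/2}$.

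\emph{First ingredient: half-line Cauchy--Schwarz.} The one-sided support lets you replace $1_A$ by $1_{A_i}$. You then apply Cauchy--Schwarz against only the nonpositive half of the symmetric function $1_{A_i}*\mu_i^{\circ}$, whose $\ell^2$ norm is $|A_i|^{-1}\bigl((E(A_i)+|A_i|^2)/2\bigr)^{1/2}$. This halving cancels the $\sqrt2$ lost in $\|h_f\|_2\le\sqrt2\|f\|_2$. Summing the geometric series over $j>i$ gives
\[\anorm{1_A}_1\ge(1-e^{-b})\Bigl(J-\frac{b}{\sqrt\lambda-1}\sum_{i=1}^J\bigl(\omega[A_i]+|A_i|^{-1}\bigr)^{1/2}\Bigr).\]

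\emph{Second ingredient: the energy bound.} The rearrangement inequality $E(A)\le(\tfrac23+o(1))|A|^3$ replaces the trivial $\omega\le1$.

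\emph{Optimisation.} With $J\ge(1-o(1))\log N/\log\lambda$, you maximise $\frac{1-e^{-b}}{\log\lambda}\bigl(1-\sqrt{2/3}\,\frac{b}{\sqrt\lambda-1}\bigr)$. This gives $0.170934\cdots$ at $\lambda\approx9.11$, $b\approx0.93$.

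Two consequences for your plan. The per-block error is of order $b/(\sqrt\lambda-1)$, not $\lambda^{-1}$. In the regime $\lambda\to1^+$, which forces $b\to0$ to keep the bracket positive, this bound only yields $1/(8\sqrt{2/3})\approx0.153$.

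Without the half-line Cauchy--Schwarz and the energy bound, or a concretely analysed substitute, your steps (ii) and (iv) do not yield any explicit constant, let alone this one.
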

For comparison note that $4/\pi^2\approx 0.405$ so we are still about a factor of $2.4$ away from the conjectured optimal value. The value of $c$ is found as the maximum of a complicated two-variable expression and presumably has no nicer form.

The proofs of Theorems~\ref{th-struc} and \ref{th-const} both hinge on a detailed analysis of the test functions used by McGehee, Pigno, and Smith. This is the main business of the paper.

In the final section of the paper we make some additional comments and record some further questions and conjectures.

\subsection*{Notation} For any function $f\in \ell^1(\bbz)$ we define the Fourier transform $\widehat{f}:[0,1]\to \bbc$ by
\[\widehat{f}(\theta) = \sum_{n} f(n) e(-n\theta),\]
where $e(x) = e^{2\pi ix}$. The convolution is defined in the usual way, so that if $f,g\in \ell^1(\bbz)$ then 
\[f\ast g(x) = \sum_{y} f(y)g(x-y).\]
Note that if $A=\supp(f)$ and $B=\supp(g)$ then $\supp(f\ast g)\subseteq A+B$. We note here the useful property that $\widehat{f\ast g}=\widehat{f}\cdot \widehat{g}$. It is convenient to use inner product notation, so that if $f,g:\bbz\to\bbc$ we write 
\[\langle f,g\rangle = \sum_{n\in \bbz}f(n)\overline{g(n)},\]
and $\langle \widehat{f},\widehat{g}\rangle = \int_0^1 \widehat{f}(\theta)\overline{\widehat{g}(\theta)}\td \theta$. By Parseval's identity
\[\langle f,g\rangle = \langle \widehat{f},\widehat{g}\rangle.\]
\subsection*{Acknowledgements} TB is supported by a Royal Society University Research Fellowship. BG is supported by a Simons Investigator award. We thank Benjamin Bedert for many insightful and productive discussions concerning the inverse Littlewood problem, and the referee for observing that a minor modification to the argument gives a larger value of $c$ than we had originally claimed.

\section{McGehee-Pigno-Smith test functions}
The overall strategy to prove Theorem~\ref{th-struc} is to mimic proofs of the Littlewood conjecture, aiming to prove a lower bound of the shape $\anorm{1_A}_1>K\log N$. Since this lower bound is false by assumption, some stage in our purported proof must fail, which will reveal structural information about $A$.

All proofs of the Littlewood conjecture follow a similar strategy, obtaining a lower bound for $\Vert \widehat{1}_A \Vert_1$ via construction of a suitable test function. In particular, our goal will be to try to construct a test function $\ri:\bbz\to\bbc$ such that
\begin{enumerate}
\item $\|\widehat{\ri}\|_\infty \leq 1$ and
\item $\langle \ind{A},\ri\rangle > K\log N$.
\end{enumerate}
By Parseval's identity and the triangle inequality these facts combined imply $\anorm{\ind{A}}_1>K\log N$. This contradicts our hypothesis, so the construction of such a test function must fail. The new observation is that, examining the details of the construction of the test function, this yields some additive information about $A$.

We will use the test functions of McGehee, Pigno, and Smith \cite{MPS81}. These are not the only ones capable of proving the Littlewood conjecture but, of the ones we are aware of, they perform the best from the quantitative point of view. For a discussion of other possible test functions we refer to the survey paper of Fournier \cite{Fo83}. 

The following lemma details the construction of these test functions.

\begin{lemma}\label{lem-MPS}
Let $b>0$ and set $c=1-e^{-b}$. If $f:\bbz \to \bbr$ is a finitely supported function then there exists a function $\ri_f:\bbz\to \bbc$ \textup{(}which depends on $b$\textup{)} with the following properties:
\begin{enumerate}
\item $\ri_f$ is supported on $\bbz_{\leq 0}$;
\item $\norm{\ri_f}_2\leq 2^{1/2}b\norm{f}_2$;
\item $\|\widehat{\ri_f}+1\|_\infty\leq 1$; and 
\item if $\|\widehat{f}\|_\infty\leq 1$ then, for any $g:\bbz\to \bbc$ with $\| \widehat{g}\|_\infty\leq 1$, the function
\[h :=g+cf+g\ast \ri_f\]
satisfies $\| \widehat{h}\|_\infty \leq 1$.
\end{enumerate}
\end{lemma}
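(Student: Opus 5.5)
The plan is to build $\widehat{\ri_f}$ as $e^{-\Phi}-1$, where $\Phi$ is an analytic function whose real part is $b|\widehat f|$; the four properties then follow from elementary estimates. Let $u(\theta):=|\widehat f(\theta)|$, a real, nonnegative, bounded (indeed continuous) function on $\bbr/\bbz$, with $\|u\|_2=\|f\|_2$ by Parseval. We take the ``analytic completion'' of $bu$ with respect to the frequencies $\bbz_{\leq 0}$. Concretely, write $u(\theta)=\sum_n \widehat u_n e(-n\theta)$ with $\widehat u_{-n}=\overline{\widehat u_n}$, and set
\[
\Phi(\theta):= b\Bigl(\widehat u_0+2\sum_{n<0}\widehat u_n e(-n\theta)\Bigr),
\]
so that $\Phi=b(u+i\tilde u)$ with $\tilde u$ the conjugate function, and $\operatorname{Re}\Phi=bu\geq 0$. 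We then define $\ri_f$ by $\widehat{\ri_f}:=e^{-\Phi}-1$.

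The key steps are as follows.
\begin{enumerate}
\item \emph{Support.} $\Phi$ is the boundary value of $\Phi(z)=b\bigl(\widehat u_0+2\sum_{m>0}\widehat u_{-m}z^m\bigr)$, which is holomorphic on the disc with real part the Poisson integral of $bu$, hence $\geq 0$. So $e^{-\Phi(z)}$ is a bounded holomorphic function, lying in $H^\infty$, and its boundary values have Fourier coefficients supported on the nonpositive indices in the paper's convention $e(-n\theta)$. Subtracting $1$ does not change this, which gives (1).
\item \emph{Sup bound.} We have $|\widehat{\ri_f}+1|=e^{-bu}\leq 1$ pointwise, which gives (3).
\item \emph{$L^2$ bound.} For $\operatorname{Re} z\geq 0$ we have $|e^{-z}-1|\leq |z|$, by integrating along the segment from $0$ to $z$. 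Hence $\|\ri_f\|_2=\|e^{-\Phi}-1\|_2\leq \|\Phi\|_2$. Parseval then gives
\[
\|\Phi\|_2^2=b^2\bigl(|\widehat u_0|^2+4\sum_{n<0}|\widehat u_n|^2\bigr)\leq 2b^2\|u\|_2^2=2b^2\|f\|_2^2,
\]
using $\|u\|_2^2=|\widehat u_0|^2+2\sum_{n<0}|\widehat u_n|^2$. This gives (2).
\item \emph{The combination $h$.} Since $\widehat{g\ast\ri_f}=\widehat g\,\widehat{\ri_f}$, we get $\widehat h=\widehat g\,e^{-\Phi}+c\widehat f$. Writing $x=|\widehat f(\theta)|\in[0,1]$, this gives
\[
|\widehat h(\theta)|\leq e^{-bx}+(1-e^{-b})x .
\]
The right-hand side is convex in $x$ and equals $1$ at both $x=0$ and $x=1$, so it is at most $1$ on $[0,1]$. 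This gives (4).
\end{enumerate}

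The main obstacle is the rigour of step 1 and of the convolution identity in step 4, since $\tilde u$ need not be bounded. One has to confirm that $\widehat{\ri_f}\in L^\infty$ has coefficients supported in $\bbz_{\leq 0}$; I would do this via the $H^\infty$ argument above, using that $\Phi$ lies in $H^2$ and $e^{-\Phi}$ is bounded. One also has to make sense of $g\ast\ri_f$ when $\ri_f\in\ell^2$ only. For this, note that $\widehat g\,\widehat{\ri_f}$ is a product of an $L^\infty$ and an $L^2$ function (or, if needed, first approximate $u$ by trigonometric polynomials and pass to the limit), so the identity holds in the $\ell^2$ sense.

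The realness of $f$ is not needed for these properties. It only ensures consistency with later uses of the lemma.
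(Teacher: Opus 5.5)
Your proposal is correct and follows essentially the same route as the paper: you set $\widehat{\ri_f}=e^{-b\widehat{h_f}}-1$ with $\widehat{h_f}$ the analytic completion of $|\widehat f|$, then use the same pointwise facts $|e^{-z}-1|\leq |z|$ for $\Re z\geq 0$, $|e^{-b\widehat{h_f}}|=e^{-b|\widehat f|}$, and $e^{-bx}+(1-e^{-b})x\leq 1$ on $[0,1]$. The differences are cosmetic: the paper gets the support property by expanding $\ri_f=\sum_{j\geq 1}\frac{(-b)^j}{j!}h_f^{(j)}$ into convolution powers of a function supported on $\bbz_{\leq 0}$ rather than via $H^\infty$, and your use of Hermitian symmetry instead of evenness of $|\widehat f|$ correctly shows that realness of $f$ is not needed.
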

\begin{proof}
Since $\Abs{\widehat{f}}$ is a $1$-periodic symmetric function we can write 
\[\Abs{\widehat{f}(\theta)}=\sum_{n\in \bbz}c_ne(n\theta)\]
for some $c_n\in \bbr$ with $c_n=c_{-n}$, and define $h_f:\bbz \to \bbc$ by
\[h_f(n) = \begin{cases} c_0&\textrm{if }n=0\\
2c_n&\textrm{ if }n<0\textrm{, and}\\
0&\textrm{otherwise,}\end{cases}\]
so that $h_f$ is supported on $\bbz_{\leq 0}$. Furthermore, since
\[\widehat{h_f}(\theta)=c_0+2 \sum_{n<0}c_ne(n\theta),\]
for any $\theta\in \bbr/\bbz$ we have
\[\Re \widehat{h_f}(\theta) = c_0+\sum_{n<0}c_n(e(n\theta)+e(-n\theta))= \Abs{\widehat{f}(\theta)} \geq 0.\]
By Parseval's theorem,
\[\norm{h_f}_2^2=c_0^2+2\sum_{n\neq 0}c_n^2\leq 2\sum_n c_n^2=2\| f\|_2^2.\]
We may now define
\[\ri_f(n) = \sum_{j\geq 1}\frac{(-b)^j}{j!} h_f^{(j)}(n),\]
where $h_f^{(j)}=h_f\ast\cdots\ast h_f$ denotes the $j$-fold convolution. Equivalently, we may define  $\ri_f$ via its Fourier transform by
\[\widehat{\ri_f}(\theta)=\sum_{j\geq 1}\frac{(-b)^j}{j!}\widehat{h_f}(\theta)^j=e^{-b\widehat{h_f}(\theta)}-1.\]
Since $\supp(g_1\ast g_2)\subseteq \supp(g_1)+\supp(g_2)$ for any pair of functions $g_1, g_2 : \bbz \rightarrow \bbc$, it is clear that $\ri_f$ is supported on $\bbz_{\leq 0}$, which is item (1). 

The inequality $\abs{e^{-z}-1}\leq \abs{z}$, valid whenever $\Re z\geq 0$, implies 
\[\Abs{\widehat{\ri_f}(\theta)} \leq b\Abs{\widehat{h_f}(\theta)}\]
pointwise, and hence in particular
\[\norm{\ri_f}_2\leq b\norm{h_f}_2\leq 2^{1/2}b\norm{f}_2,\] which is item (2).
We also have
\[\Abs{\widehat{\ri_f}(\theta)+1}= e^{-b\Re \widehat{h_f}(\theta)}=e^{-b\Abs{\widehat{f}(\theta)}}\leq 1\]
pointwise, which is (3). 

Finally, suppose $\|\widehat{f} \|_{\infty},\| \widehat{g}\|_\infty \leq 1$ and consider
\[h=cf+g+g\ast \ri_f,\] where $c = 1 - e^{-b}$.
Taking the Fourier transform, we have the pointwise inequality
\[\Abs{\widehat{h}}\leq \Abs{\widehat{g}}\Abs{1+\widehat{\ri_f}}+c\Abs{\widehat{f}}\leq e^{-b\Abs{\widehat{f}}}+(1 - e^{-b})\Abs{\widehat{f}} \leq 1,\] which is (4). 
In the final step here we used the inequality $e^{-bx}+(1 - e^{-b})x\leq 1$, which is valid for all $x$ with $0\leq x\leq 1$. 
\end{proof}

The form of item (4) lends itself to an iterative argument, the details of which are as follows.

\begin{lemma}\label{lem-test}
Let $b>0$ and $c=1-e^{-b}$. Let $J \in \bbn$ and suppose that $g_1,\ldots,g_J:\bbz\to \bbr$ are finitely supported functions such that $\| \widehat{g_i}\|_\infty\leq 1$ for all $i = 1,\dots,J$. Then there exists a test function $\ri:\bbz\to \bbc$ such that $\| \widehat{\ri}\|_\infty \leq 1$ and
\begin{equation}\label{r-form}\ri = c\sum_{j=1}^J g_j+\sum_{j=1}^JD_j,\end{equation}
where the function $D_j$ has the shape
\begin{equation}\label{d-shape} D_j = c\sum_{1\leq i<j}g_i\ast F_{ij},\end{equation}
where
\begin{equation}\label{fij-1} \| F_{ij}\|_2\leq 2^{1/2}b\norm{g_j}_2,\end{equation} and $F_{ij}$ is supported on $\bbz_{\leq 0}$.
\end{lemma}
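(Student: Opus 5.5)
The plan is to build $\ri$ by applying Lemma~\ref{lem-MPS}(4) $J$ times, each time with one new function $g_j$ in the role of $f$. Set $\ri_0 := 0$ and, for $1\leq j\leq J$,
\[\ri_j := \ri_{j-1} + c\,g_j + \ri_{j-1}\ast \ri_{g_j},\]
where $\ri_{g_j}$ is the function from Lemma~\ref{lem-MPS} with $f=g_j$ (legitimate, since $g_j$ is real-valued and finitely supported). By induction $\|\widehat{\ri_j}\|_\infty\leq 1$. The base case is trivial, and the inductive step is exactly item (4) of Lemma~\ref{lem-MPS} with $g=\ri_{j-1}$ and $f=g_j$, using $\|\widehat{g_j}\|_\infty\leq 1$. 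Take $\ri:=\ri_J$ and $D_j := \ri_{j-1}\ast \ri_{g_j}$; then \eqref{r-form} holds by telescoping.

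It remains to put $D_j$ in the form \eqref{d-shape}. Write $\delta_0$ for the point mass at $0$, and rewrite the recursion as $\ri_j = \ri_{j-1}\ast(\delta_0+\ri_{g_j}) + c\,g_j$. Unrolling it gives
\[\ri_{j-1} = c\sum_{i<j} g_i \ast P_{i,j-1},\qquad P_{i,j-1} := (\delta_0+\ri_{g_{i+1}})\ast\cdots\ast(\delta_0+\ri_{g_{j-1}}),\]
with $P_{j-1,j-1}=\delta_0$. Hence $D_j = c\sum_{i<j} g_i\ast F_{ij}$, where
\[F_{ij} := P_{i,j-1}\ast \ri_{g_j}.\]

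The two required properties of $F_{ij}$ then follow from Lemma~\ref{lem-MPS}.
\begin{itemize}
\item Support: each factor $\delta_0+\ri_{g_k}$, and also $\ri_{g_j}$, is supported on $\bbz_{\leq 0}$ by item (1). Since supports add under convolution, $F_{ij}$ is supported on $\bbz_{\leq 0}$.
\item $L^2$ bound: on the Fourier side, $\widehat{F_{ij}} = \widehat{\ri_{g_j}}\prod_{k=i+1}^{j-1}(1+\widehat{\ri_{g_k}})$, and each factor in the product has modulus at most $1$ by item (3). So $|\widehat{F_{ij}}|\leq|\widehat{\ri_{g_j}}|$ pointwise. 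Parseval and item (2) then give $\|F_{ij}\|_2\leq \|\ri_{g_j}\|_2\leq 2^{1/2}b\|g_j\|_2$, which is \eqref{fij-1}.
\end{itemize}

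The only step needing care is the analytic bookkeeping, since $\ri_{g_k}$ need not be finitely supported. However, it is in $\ell^2$ with bounded Fourier transform, because $\widehat{\ri_{g_k}} = e^{-b\widehat{h}}-1$ with $\widehat h$ a trigonometric series whose real part is at least $0$. So the convolutions are defined via Fourier multipliers, the identity $\widehat{f\ast g}=\widehat f\,\widehat g$ applies, and convolving with the finitely supported $g_i$ causes no convergence issues. I expect the main (mild) obstacle to be spotting the unrolled product formula for $\ri_{j-1}$. Once that is in hand, everything else is a direct application of Lemma~\ref{lem-MPS}.
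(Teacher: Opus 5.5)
Your proposal is correct and follows essentially the same route as the paper. You use the same recursion $\ri_j=\ri_{j-1}+cg_j+\ri_{j-1}\ast\ri_{g_j}$ with $D_j=\ri_{j-1}\ast\ri_{g_j}$, the same unrolled formula (which the paper writes on the Fourier side), and the same choice $\widehat{F_{ij}}=\widehat{\ri_{g_j}}\prod_{i<\ell<j}(1+\widehat{\ri_{g_\ell}})$ bounded via Lemma~\ref{lem-MPS}(1)--(3), plus a harmless remark on convergence that the paper leaves implicit.
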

\begin{proof}
We iteratively define functions $\ri_i : \bbz \rightarrow \bbc$ by $\ri_1 :=cg_1$ and
\[\ri_{i+1} :=\ri_i+cg_{i+1}+D_{i+1}\] for $i \geq 1$, where
\[D_{i+1} :=\ri_i\ast \ri_{g_{i+1}}.\]
(Here $\ri_{g_{i+1}}$ is the test function given by Lemma~\ref{lem-MPS} with $f=g_{i+1}$.) We then set $\ri=\ri_J$. The fact that $\ri$ has the form \eqref{r-form} is a trivial induction. 

By the final property of Lemma~\ref{lem-MPS} and a trivial induction, we have $\|\widehat{\ri_i}\|_\infty\leq 1$ for all $i$. A downwards induction on $i$ using the identity
\[\widehat{\ri_{i}}=c\widehat{g_{i}}+\widehat{\ri_{i - 1}}(\widehat{\ri_{g_{i}}}+1)\]
yields, for any $i<j$,
\[\widehat{\ri_j}=c\sum_{i<k\leq j}\widehat{g_k}\prod_{k<\ell\leq j}(\widehat{\ri_{g_\ell}}+1)+\widehat{\ri_i}\prod_{i<\ell\leq j}(\widehat{\ri_{g_\ell}}+1),\] where the product over $k < \ell \leq j$ is interpreted as $1$ when $k = j$.
In particular, taking $i = 1$ we have
\[\widehat{\ri_{j}}=c\sum_{1\leq i\leq j}\widehat{g_i}\prod_{i< \ell\leq j}(\widehat{\ri_{g_\ell}}+1).\]
From the definition of $D_j$ it then follows that
\[\widehat{D_j}=\widehat{\ri_{j-1}}\widehat{\ri_{g_j}}=c\sum_{1\leq i<j}\widehat{g_i}\widehat{\ri_{g_j}}\prod_{i< \ell<j}(\widehat{\ri_{g_\ell}}+1).\]
Defining
\[\widehat{F_{ij}} :=\widehat{\ri_{g_j}}\prod_{i< \ell < j}(\widehat{\ri_{g_\ell}}+1),\] we see that \eqref{d-shape} holds.

The bound \eqref{fij-1} now follows using Lemma \ref{lem-MPS} (2) and (3) and Parseval's identity $\Vert \widehat{F_{ij}} \Vert_2 = \Vert F_{ij} \Vert_2$. To see that $F_{ij}$ is supported on $\bbz_{\leq 0}$, observe that $F_{ij}$ is a sum of convolutions of functions $\mathcal{R}_{g_\ell}$, and each such function is supported on $\bbz_{\leq 0}$ by Lemma \ref{lem-MPS} (1).
\end{proof}

\section{A lower bound for $\anorm{1_A}_1$}

We will now apply the McGehee-Pigno-Smith test function from the previous section to prove the following technical proposition. It will be the key input in the proof of our two main results, Theorems \ref{th-struc} and \ref{th-const}. An initial segment of $A$ is a non-empty subset of the shape $A\cap (-\infty,x]$ for some $x\in \mathbb{R}$.

\begin{proposition}\label{prop-litt}
Let $b > 0$ be a real parameter, and let $J \in \bbn$. Let $A\subset\bbz$ be a finite set and suppose that $A_1,\dots, A_J$ are initial segments of $A$ with $A_1 \subseteq \cdots \subseteq A_J$. Let $\lambda > 1$ be a parameter, and suppose that $\abs{A_{j+1}}\geq \lambda \abs{A_{j}}$ for $j = 1,\dots, J-1$. Then
\[\anorm{\ind{A}}_1\geq (1-e^{-b})\brac{J- \frac{b}{\sqrt{\lambda} - 1}  \sum_{i = 1}^J(\omega[A_i]+\abs{A_i}^{-1})^{1/2}},\]
where $\omega[ \cdot]$ denotes the normalised additive energy. 
\end{proposition}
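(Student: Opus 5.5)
The plan is to apply Lemma~\ref{lem-test} with the normalised indicator functions $g_j := \ind{A_j}/\abs{A_j}$ for $j=1,\dots,J$. These are real, finitely supported, and satisfy $\|\widehat{g_j}\|_\infty \leq 1$ by the triangle inequality, so the lemma yields a test function $\ri$ of the form \eqref{r-form} with $\|\widehat{\ri}\|_\infty\leq 1$. Then Parseval gives $\anorm{\ind{A}}_1 \geq \abs{\langle \ind{A},\ri\rangle} \geq \Re\langle \ind{A},\ri\rangle$. The main term is $c\sum_j\langle \ind{A},g_j\rangle = cJ$, where $c = 1-e^{-b}$, since $A_j\subseteq A$. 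It remains to show that
\[\sum_{j}\abs{\langle \ind{A},D_j\rangle} \leq \frac{cb}{\sqrt{\lambda}-1}\sum_i(\omega[A_i]+\abs{A_i}^{-1})^{1/2}.\]

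For the error terms, fix $i<j$ and consider $\langle \ind{A}, g_i\ast F_{ij}\rangle$. Since $g_i$ is supported on $A_i$ and $F_{ij}$ on $\bbz_{\leq 0}$, the convolution $g_i \ast F_{ij}$ is supported on $(-\infty,\max A_i]$. Because $A_i$ is an initial segment of $A$, we may replace $\ind{A}$ by $\ind{A_i}$. Unfolding the convolution gives
\[\langle \ind{A_i}, g_i\ast F_{ij}\rangle = \sum_{m\leq 0}\overline{F_{ij}(m)}\, r_i(m), \qquad r_i(m) := \abs{A_i}^{-1}\#\{(x,y)\in A_i^2 : x-y=m\}.\]
The crucial point is that only the values of $r_i$ with $m\leq 0$ appear. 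By Cauchy--Schwarz, this is at most $\|F_{ij}\|_2\,(\sum_{m\leq 0}r_i(m)^2)^{1/2}$.

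By the symmetry $r_i(m)=r_i(-m)$ we have $\abs{A_i}^2\sum_m r_i(m)^2 = E(A_i)$, with $r_i(0)=1$. Hence
\[\sum_{m\leq 0} r_i(m)^2 = \tfrac12\Big(\sum_m r_i(m)^2 + r_i(0)^2\Big) = \tfrac12\,\abs{A_i}\,(\omega[A_i]+\abs{A_i}^{-1}).\]
Combining this with \eqref{fij-1}, namely $\|F_{ij}\|_2\leq 2^{1/2}b\|g_j\|_2 = 2^{1/2}b\abs{A_j}^{-1/2}$, the factors of $2^{1/2}$ cancel. This gives
\[\abs{\langle \ind{A}, g_i\ast F_{ij}\rangle}\leq b\,(\abs{A_i}/\abs{A_j})^{1/2}(\omega[A_i]+\abs{A_i}^{-1})^{1/2}\leq b\,\lambda^{-(j-i)/2}(\omega[A_i]+\abs{A_i}^{-1})^{1/2},\]
where the last step uses $\abs{A_j}\geq \lambda^{j-i}\abs{A_i}$.

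Finally, I sum over $j>i$ for each fixed $i$. The geometric series is bounded by $\sum_{k\geq 1}\lambda^{-k/2} = 1/(\sqrt\lambda-1)$. Multiplying by the factor $c$ in \eqref{d-shape} gives exactly the claimed bound on the error terms, and hence the proposition.

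The only delicate step is getting the constant exactly as stated, without a stray $\sqrt 2$. This requires exploiting the one-sided support of $F_{ij}$ to use just half of the autocorrelation $r_i$; the diagonal term $r_i(0)=1$ is what produces the extra $\abs{A_i}^{-1}$. Everything else is routine bookkeeping from Lemma~\ref{lem-test}.
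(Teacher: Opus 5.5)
Your proposal is correct and follows essentially the same route as the paper. Both use Lemma~\ref{lem-test} with the normalised indicators $1_{A_j}/|A_j|$, reduce $1_A$ to $1_{A_i}$ via the initial-segment property and the one-sided support of $F_{ij}$, and apply Cauchy--Schwarz against the restriction of the autocorrelation ($1_{A_i}\ast\mu_i^{\circ}$ in the paper, $r_i$ in yours) to $\mathbb{Z}_{\leq 0}$, whose squared norm is $(E(A_i)+|A_i|^2)/(2|A_i|^2)$. The geometric series in $\lambda^{-1/2}$ then finishes the argument in both.
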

\begin{proof}
For $j = 1,\dots, J$, let $\mu_j=\frac{1}{\abs{A_j}}\ind{A_j}$. Then $\norm{\mu_j}_1=1$ and $\| \widehat{\mu_j}\|_\infty \leq 1$. Let $\ri$ be the associated McGehee-Pigno-Smith test function given by Lemma~\ref{lem-test}, with parameter $b$, so that $\| \widehat{\ri}\|_\infty \leq 1$ and
\[\ri=c\sum_{j = 1}^J \mu_{j}+\sum_{j = 1}^J D_j,\]
where $c=1-e^{-b}$. Since $\langle \mu_j,\ind{A}\rangle=1$ for all $j$, we deduce that
\begin{equation}\label{eq30} \anorm{\ind{A}}_1\geq \abs{\langle \ri,1_A\rangle}\geq cJ - \Big| \sum_j\langle D_j,\ind{A}\rangle \Big|.\end{equation}
It therefore suffices to prove an upper bound on $\big| \sum_j\langle D_j,\ind{A}\rangle\big|$ of sufficient quality. By \eqref{d-shape} we have $D_j=c\sum_{i<j}\mu_i\ast F_{ij}$,
and so 
\begin{equation}\label{eq31} \sum_{j = 1}^J \langle D_j,\ind{A}\rangle=c\sum_{1 \leq i<j \leq J}\langle 1_{A},\mu_i\ast F_{ij}\rangle.\end{equation}
Since $F_{ij}$ is supported on $\bbz_{\leq 0}$, from the nesting property of the $A_i$ we have 
\[ \langle 1_{A},\mu_i\ast F_{ij}\rangle = \langle 1_{A_{i}},\mu_i\ast F_{ij}\rangle = \langle 1_{A_i} \ast \mu_i^{\circ}, F_{ij}\rangle,\] where $g^{\circ}(x) := \overline{g(-x)}$.
Moreover by \eqref{fij-1} we have 
\[\| F_{ij}\|_2\leq 2^{1/2}b\norm{\mu_j}_2=2^{1/2}b\abs{A_j}^{-1/2}.\]
By the Cauchy-Schwarz inequality, we thus have, recalling that $F_{ij}$ is supported on $\bbz_{\leq 0}$,
\begin{equation}\label{eq32} \big| \langle 1_{A},\mu_i\ast F_{ij}\rangle\big| = \big| \langle 1_{A_{i}} \ast \mu_i^{\circ} , F_{ij}\rangle\big|  \leq 2^{1/2} b |A_j|^{-1/2} \big\Vert (1_{A_{i}} \ast \mu_i^{\circ})1_{\bbz_{\leq 0}} \big\Vert_2.\end{equation}
However one may compute that 
\[ \Vert (1_{A_{i}} \ast \mu_i^{\circ})1_{\bbz_{\leq 0}}\Vert_2 = \abs{A_i}^{-1}\brac{\frac{E(A_i)+\abs{A_i}^2}{2}}^{1/2}.\]
Combining this with \eqref{eq31} and \eqref{eq32} yields
\[ \Big| \sum_{j = 1}^J \langle D_j, 1_A\rangle \Big| \leq c b \sum_{1 \leq i < j \leq J}\frac{(E(A_i)+\abs{A_i}^2)^{1/2}}{|A_i| |A_j|^{1/2}}.\]
Substituting into \eqref{eq30} gives
\[\anorm{1_A}_1\geq (1-e^{-b})\Big( J-b\sum_{i = 1}^J\frac{(E(A_i)+\abs{A_i}^2)^{1/2}}{\abs{A_i}}\sum_{i<j\leq J}\abs{A_j}^{-1/2}\Big).\]
By the assumption in the proposition we have $|A_{j}|^{-1/2} \leq \lambda^{(i - j)/2} |A_i|^{-1/2}$ for $j \geq i$. Summing the geometric series yields
\[ \sum_{j > i} |A_j|^{-1/2} \leq \frac{\abs{A_i}^{-1/2}}{\sqrt{\lambda} - 1},\] and the stated conclusion follows using the definition $\omega[A_i] = E(A_i)/|A_i|^3$.
\end{proof}

\section{Improved constants for Littlewood's conjecture}

In the next two sections we deduce our two main theorems from Proposition~\ref{prop-litt}, starting here with Theorem \ref{th-const}, the improved constant for Littlewood's conjecture. 

One may deduce Littlewood's conjecture (with some constant) from Proposition~\ref{prop-litt} by taking $A_i$ to be roughly the $\approx \lambda^{i}$ smallest elements of $A$ for all $i$ and using the trivial bound $\omega[A_i] \leq 1$, choosing $b>0$ and $1/\lambda$ suitably small absolute constants yields $\anorm{1_A}_1\gg J$, where $\lambda^{J}\asymp N$ and hence $J\gg_{\lambda} \log N$ as required.

To extract a decent constant from this is a delicate optimisation in both $b$ and $\lambda$, and this was the task considered by \cite{St82,Ya82}. To get the slightly better constant in Theorem \ref{th-const}, we will use a slightly less trivial bound for the $\omega[A_i]$, obtained using a rearrangement inequality.

\begin{lemma}\label{lem-energy}
Suppose that $A, B$ are finite sets of integers with cardinalities $n,m$ respectively, where $m \leq n$. Then
\[E(A,B) = \# \{ (a_1, b_1, a_2, b_2) \in A \times B \times A \times B : a_1 + b_1 = a_2 + b_2 \}\leq nm^2\big( 1 - \frac{m}{3n}\big) + \frac{m}{3}.\] In particular, $\omega[A] \leq \frac{2}{3} + o_{n \rightarrow \infty}(1)$.
\end{lemma}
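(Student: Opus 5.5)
We write $E(A,B)=\sum_x r(x)^2$, where $r(x)=1_A\ast 1_{-B}(x)=\#\{(a,b)\in A\times B: a-b=x\}$. Indeed $a_1+b_1=a_2+b_2$ is equivalent to $a_1-b_2=a_2-b_1$, so the quadruples are counted by pairs of representations of the same difference $a-b$. The function $r$ is nonnegative integer valued, satisfies $\sum_x r(x)=nm$, and obeys $r(x)\leq m$ since $r(x)\leq \min(n,m)$. These facts alone give only $E\leq nm^2$. The improvement comes from bounding the level sets of $r$.

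The plan is to use the distribution function $N_k=\#\{x: r(x)\geq k\}$ for $1\leq k\leq m$, so that
\[ E(A,B)=\sum_{k\geq 1}(2k-1)N_k. \]
The key input is the bound $N_k\leq n+m-2k+1$. This is the standard Pollard/Cauchy–Davenport type bound over $\bbz$: order $A$ and $B$, and observe that an element $x$ with at least $k$ representations $a-b$ cannot lie among the $k-1$ smallest or the $k-1$ largest elements of $A-B$.

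To make this precise, I would use the known inequality $\sum_{k=1}^{t}N_k\leq t(n+m-t)$ for $t\leq m$ (Pollard's theorem in $\bbz$, provable by the compression argument or by induction). Alternatively, I would argue directly that the set $\{x: r(x)\geq k\}$ has at most $n+m-2k+1$ elements. For the direct argument, fix the $j$-th representation structure. If $x=a-b$ has $k$ representations, then among the $k$ elements of $A$ involved, the smallest corresponds to a largest $b$. A careful version of this is the main obstacle, and I would simply invoke Pollard's inequality, which suffices after Abel summation.

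Given $N_k\leq n+m-2k+1$, we also have the trivial constraint $\sum_k N_k=nm$. We then maximize $\sum_k(2k-1)N_k$ subject to $0\leq N_k\leq n+m-2k+1$, $N_k$ nonincreasing, and $\sum_k N_k=nm$ (or use Pollard's partial-sum form directly). Since the weights $2k-1$ increase while the caps decrease, the extremal configuration takes $N_k=n+m-2k+1$ for all $1\leq k\leq m$. This configuration is consistent, since $\sum_{k=1}^m(n+m-2k+1)=nm$ exactly. Via Abel summation against Pollard's partial sums, it is indeed the maximum. Computing,
\[ \sum_{k=1}^m(2k-1)(n+m+1-2k)=(n+m+1)m^2-\sum_{k=1}^m(2k-1)2k. \]
The subtracted sum equals $\frac{m(m+1)(4m-1)}{3}$. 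Simplifying gives $nm^2-\frac{m^3}{3}+\frac{m}{3}$, which is exactly $nm^2(1-\frac{m}{3n})+\frac m3$. This is the shape of an arithmetic progression, confirming tightness.

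For the final claim, we take $B=A$ with $m=n$. This gives $E(A)\leq \frac{2}{3}n^3+\frac n3$, so $\omega[A]\leq \frac23+O(n^{-2})$.
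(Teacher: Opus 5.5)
There is a genuine gap: the key input is false as stated. Since $N_1=|A-B|$, your bound $N_k\le n+m-2k+1$ at $k=1$ says $|A-B|\le n+m-1$. In fact $|A-B|\ge n+m-1$ always, usually strictly; for example $A=\{0,1\}$, $B=\{0,2\}$ gives $|A-B|=4>3$, and $|A-B|$ can be as large as $nm$. The bound fails for $k\ge 2$ too: with $A=\{0,1,10,11,100,101\}$ and $B=-A$ one gets $N_2=12>9$.

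Likewise, Pollard's theorem is a \emph{lower} bound, $\sum_{k=1}^t N_k\ge t(n+m-t)$ for $1\le t\le m$. You have reversed the inequality, just as the Cauchy--Davenport-type bound $|A-B|\ge n+m-1$ is a lower bound. Your ordering heuristic is correct, but it only shows that an $x$ with $r(x)\ge k$ avoids the $k-1$ smallest and $k-1$ largest elements of $A-B$. That gives $N_k\le |A-B|-2k+2$, which is useless because nothing bounds $|A-B|$ from above. So the optimisation step, and the claim that $N_k=n+m-2k+1$ is forced, rest on constraints that do not hold.

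The level-set strategy can be rescued by using Pollard in the correct direction. Because $r\le m$ and $\sum_{k=1}^m N_k=nm$, the true inequality is equivalent to the tail bounds $\sum_{k>t}N_k\le nm-t(n+m-t)=(n-t)(m-t)$. These are exactly what Abel summation needs:
\[E(A,B)=\sum_{k\ge1}(2k-1)N_k=nm+2\sum_{t\ge1}\sum_{k>t}N_k\le nm+2\sum_{t=1}^{m-1}(n-t)(m-t)=nm^2-\frac{m^3}{3}+\frac{m}{3}.\]
This repaired argument is valid, but it imports Pollard's theorem, transferred to integers by reducing modulo a large prime. The paper needs nothing of the kind. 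It writes $E(A,B)=\sum_{(a,b)\in A\times B}r(a+b)$ and bounds each term by $r(a_i+b_j)\le 1+\min(n-i,j-1)+\min(m-j,i-1)$, using only the ordering of $A$ and $B$. That is essentially your heuristic applied to each pair $(a_i,b_j)$ rather than to level sets, which is why it never has to control the size of the sumset.
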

\begin{proof}
This follows from a well-known rearrangement inequality (see Gabriel \cite[Theorem 3]{Gabriel} or \cite[Theorem 376]{HLP52}\footnote{It is important to note, when reading this reference, the convention established on the previous page: `We may agree that, when there is no indication to the contrary, sums involving several suffixes are extended over values of the suffixes whose sum vanishes'.}). A quick proof is as follows. Let the elements of $A$ be $a_1 < a_2 < \cdots < a_n$, and the elements of $B$ be $b_1 < \cdots < b_m$. Writing $r(x)$ for the number of pairs $(a,b) \in A \times B$ with $a + b = x$, we have
\begin{equation}\label{order-bds} r(a_i + b_j) \leq 1 + \min(n - i, j - 1) + \min(m - j, i - 1).\end{equation} Indeed, the constant $1$ counts the representation $a_i + b_j$ itself. The $\min(n - i, j - 1)$ term is an upper bound for representations $a_i + b_j = a_{i'} + b_{j'}$ with $i < i' \leq n$, which must have also have $1 \leq j' < j$; for each $i'$ (or $j'$) there is at most one such representation. The $\min(m - j, i - 1)$ term is an upper bound for representations $a_i + b_j = a_{i'} + b_{j'}$ with $1 \leq i' < i$ (and hence $j < j' \leq m$).
Now sum \eqref{order-bds} over $1 \leq i \leq n$ and $1 \leq j \leq m$. The left-hand side becomes $E(A, B)$, and (after a computation) the right-hand side becomes $nm^2\big( 1 - \frac{m}{3n}\big) + \frac{m}{3}$.
\end{proof}

We now give the deduction of Theorem~\ref{th-const} from Proposition \ref{prop-litt}.

\begin{proof}[Proof of Theorem~\ref{th-const}] Let $b>0$ and $\lambda \geq 2$ be some constants to be chosen later, and fix some $A\subset \bbz$ of size $N$, for some large $N$ (where the notion of `large' may depend on $\lambda$ and $b$). We now construct nested initial segments $A_1 \subseteq \cdots \subseteq A_J$ of $A$, for a certain parameter $J$. 

Let $A_1$ be the set consisting of the smallest $\lfloor \log N\rfloor$ elements of $A$. In general, if $A_1,\ldots,A_j$ have been constructed, let $A_{j+1}$ be the set of the smallest $\lceil \lambda\abs{A_j}\rceil$ elements of $A$ and continue. Otherwise, we set $J=j$ and halt. Furthermore we have $\abs{A_{j+1}}\geq \lambda\abs{A_{j}}$ for $j = 1,\dots, J-1$. Additionally, $\abs{A_{j+1}}\leq \lambda\abs{A_j}+1$ for $j = 1,\dots, J-1$, and hence by induction
\[ |A_{j}| \leq \lambda^r \big(|A_{j-r}| + \lambda^{-1} + \lambda^{-2} + \cdots + \lambda^{-r}\big) \] for any $r \geq 1$.
Taking $r = j - 1$ (and since $\lambda \geq 2$) we have 
\[ \abs{A_{j}}\leq \lambda^{j-1}(1 + \log N ).\]
By definition, the parameter $J$ was minimal such that
\[N< \lambda\abs{A_J}.\] It follows that 
\[ N < \lambda^{J}(1 + \log N)\]
and so
\begin{equation}\label{J-lower}  J\geq (1-o(1))\frac{\log N}{\log \lambda}.\end{equation}
Here, and below, $o(1)$ means a quantity tending to $0$ as $N \rightarrow \infty$.

Applying Lemma \ref{lem-energy} to $A_1,\dots, A_J$ and substituting in to Proposition~\ref{prop-litt}, and recalling our bound \eqref{J-lower} on $J$, we deduce that
\[\anorm{1_A}_1\geq (1-o(1))f(b,\lambda)\log N\]
where
\[f(b,\lambda)=\frac{1-e^{-b}}{\log \lambda}\brac{1-(2/3)^{1/2}\frac{b}{\sqrt{\lambda} - 1}}.\]
It remains to choose $b>0$ and $\lambda\in [2,\infty)$ to maximise $f(b,\lambda)$. Computational investigations tell us that the maximum value is
\[f(b,\lambda)=0.170934\cdots\]
achieved at
\[b=0.932199\cdots \quad \textrm{and}\quad \lambda=9.112038\cdots.\qedhere\]
\end{proof}

\section{A structural inverse result}

Now we turn to the deduction of Theorem \ref{th-struc}.
In deducing Theorem~\ref{th-struc} we are not concerned with constants, and so we record the following simpler consequence of Proposition~\ref{prop-litt}.

\begin{corollary}\label{cor-litt}
Let $\eta\in(0,1/4]$. Let $A\subset\bbz$ be a finite set. Suppose we have a nested sequence $A_1\subseteq A_2\subseteq \cdots \subseteq A_J$ of initial segments of $A$
such that
\[\abs{A_{i-1}}\leq (1-\eta)\abs{A_i}.\]
There exists a constant $c>0$ such that 
\[\anorm{\ind{A}}_1\gg J -c\eta^{-1} \sum_{i = 1}^J  \omega[A_i]^{1/2}.\]
\end{corollary}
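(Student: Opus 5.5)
The plan is to apply Proposition~\ref{prop-litt} directly with the fixed choice $b=1$ and $\lambda := (1-\eta)^{-1}$. The hypothesis $\abs{A_{i-1}}\leq (1-\eta)\abs{A_i}$ is exactly $\abs{A_{i}}\geq \lambda\abs{A_{i-1}}$, and $\lambda>1$. The sets $A_1\subseteq\cdots\subseteq A_J$ are nested initial segments, so all hypotheses of Proposition~\ref{prop-litt} hold. It gives
\[\anorm{\ind{A}}_1\geq (1-e^{-1})\Big(J-\frac{1}{\sqrt{\lambda}-1}\sum_{i=1}^J(\omega[A_i]+\abs{A_i}^{-1})^{1/2}\Big).\]

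Two elementary simplifications then finish the argument.

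First, the term $\abs{A_i}^{-1}$ can be absorbed into $\omega[A_i]$. The trivial solutions $b_1=b_3$, $b_2=b_4$ already give $E(B)\geq \abs{B}^2$ for any finite set $B$. Hence $\omega[B]\geq \abs{B}^{-1}$, so $(\omega[A_i]+\abs{A_i}^{-1})^{1/2}\leq 2^{1/2}\omega[A_i]^{1/2}$.

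Second, we need a lower bound for $\sqrt{\lambda}-1$ in terms of $\eta$. Since $\lambda=(1-\eta)^{-1}\geq 1+\eta$, we have $\sqrt{\lambda}-1\geq \sqrt{1+\eta}-1=\eta/(\sqrt{1+\eta}+1)$. For $\eta\leq 1/4$ the denominator is at most $3$, so $\sqrt{\lambda}-1\geq \eta/3$.

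Combining these gives
\[\anorm{\ind{A}}_1\geq (1-e^{-1})\Big(J-3\sqrt2\,\eta^{-1}\sum_{i=1}^J\omega[A_i]^{1/2}\Big).\]
This is the claimed bound, with implied constant $1-e^{-1}$ and $c=3\sqrt2$.

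There is no real obstacle here. The only points needing care are absorbing the $\abs{A_i}^{-1}$ term, which the trivial energy lower bound handles cleanly, and the uniform estimate $\sqrt{\lambda}-1\gg\eta$, which is where the restriction $\eta\leq 1/4$ is used. Any fixed $b$ would do, since constants are irrelevant for this corollary.
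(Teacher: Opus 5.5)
Your proposal is correct and matches the paper's proof: apply Proposition~\ref{prop-litt} with $b=1$ and $\lambda=(1-\eta)^{-1}$, absorb the $\abs{A_i}^{-1}$ term using $\omega[A_i]\geq \abs{A_i}^{-1}$, and use $\frac{1}{\sqrt{\lambda}-1}\ll \eta^{-1}$. Your explicit constants (implied constant $1-e^{-1}$ and $c=3\sqrt{2}$) just make precise what the paper leaves implicit.
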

\begin{proof}
Noting that $\omega[A] \geq |A|^{-1}$ for any nonempty set $A$ of integers, this follows immediately from Proposition~\ref{prop-litt} with $b=1$ and $1/\lambda=1-\eta$, so that $\frac{1}{\sqrt{\lambda} - 1} \ll \eta^{-1}$.
\end{proof}

We may now prove Theorem~\ref{th-struc}.
\begin{proof}[Proof of Theorem~\ref{th-struc}] We may assume that $K \geq \frac{1}{100}$ (say) since the Littlewood conjecture is true. If $\delta/K \leq N^{-1/2}$ then the result is trivial by taking $A' = A$, since any set of size $N$ has at least $N^2$ additive quadruples.  If $\delta \leq C(\log N)^{-1}$ for some absolute constant $C>0$ then the result is again true with $A' = A$ by applying H\"older's inequality on the Fourier side, noting that $E(A)^{1/4}$ is the $L^4$-norm of $\widehat{1}_A$.

In what follows we suppose that $\delta/K > N^{-1/2}$ and that $\delta \geq C (\log N)^{-1}$ (where $C>0$ is some absolute constant to be chosen later).
Set $\eta := c\delta/K$, where $c \in (0,\frac{1}{4}]$ is an absolute constant to be specified later. In particular, $\eta^{-1} \ll N^{1/2}$.

We construct nested initial segments $A_1 \subseteq A_2 \subseteq \cdots \subseteq A_J$ of $A$ in the following fashion. Let $A_1$ be the smallest $\lfloor N^{1-\delta}\rfloor$ elements of $A$. In general, if $A_1,\ldots,A_j$ have been constructed, then let $A_{j+1}$ be the set of the smallest $\lceil (1-\eta)^{-1}\abs{A_j}\rceil$ elements of $A$ if $A$ has this many elements, or else halt with $j = J$. By construction it is clear that $\abs{A_j}\leq (1-\eta)\abs{A_{j+1}}$ for all $j \in \{1,\dots, J-1\}$. Furthermore, $\abs{A_{j+1}}\leq (1-\eta)^{-1}\abs{A_j}+1$ for $j \in \{1,\dots, J-1\}$, and hence by the same induction leading to \eqref{J-lower} we have
\begin{equation}\label{aj11}\abs{A_{j}}\leq (1-\eta)^{1-j}(N^{1-\delta}+ \eta^{-1}) \ll (1 - \eta)^{1 - j} N^{1 - \delta}.\end{equation}

The parameter $J$ was the minimal $j\geq 1$ such that
\[N< (1-\eta)^{-1}\abs{A_J},\]
and hence by \eqref{aj11} we have $N^\delta \ll (1-\eta)^{-J} \leq e^{2\eta J}$. Since $\delta \geq C(\log N)^{-1}$ (and $N$ is sufficiently large), provided $C>0$ is chosen sufficiently large, it follows that $J\gg \eta^{-1}\delta \log N$. 

By Corollary~\ref{cor-litt} we see that either
\begin{equation}\label{former} \sum_{i = 1}^J \omega[A_i]^{1/2}\gg \eta J\end{equation}
or $\anorm{1_A}_1 \gg \eta^{-1}\delta \log N$. If the constant $c$ is chosen appropriately then, since $\eta = c \delta/K$, the latter option is contrary to our assumption. Thus \eqref{former} holds and so there exists some $i$ such that
\[ \omega[A_i] \gg \eta^2\gg (\delta/K)^2.\]
The conclusion now follows taking $A'=A_{i}$ (noting that since $A_1\subseteq A'$ we always have $\abs{A'}\geq \lfloor N^{1-\delta}\rfloor$). 
\end{proof}

\section{Speculations about stronger structure}\label{sec-conj}
The structure found by Theorem~\ref{th-struc} is, although non-trivial, still quite weak compared to the kind of structure one expects to find. For instance, we do not know of a counterexample to the following conjecture.
\begin{conjecture}\label{conj-1}
Let $A\subset \bbz$ be a finite set of size $N$. If $\anorm{1_A}_1\leq K\log N$ then for some $m,r\ll_K 1$ the following holds. There are finite arithmetic progressions $P_1,\ldots,P_m\subset \mathbb{Z}$, each of length at most $N$, and finite sets $X_1,\ldots,X_r\subset \bbz$ such that $\big| \bigcup_{j}X_j\big|=o_K(N)$,
together with $\epsilon_1,\ldots,\epsilon_m,\eta_1,\ldots,\eta_r\in \{-1,1\}$ such that
\[1_A=\sum_{1\leq i\leq m}\epsilon_i 1_{P_i}+\sum_{1\leq j\leq r}\eta_j 1_{X_j}.\]
\end{conjecture}
A very similar (but slightly stronger) conjecture was put forward in print by Petridis \cite[Question 5.1]{Pe13}. Analogous results are known to be true in finite field settings due to work of Sanders and the second author \cite{GrSa08}.

A consequence of Conjecture \ref{conj-1}, still much stronger than anything we can prove, would be that there exists an arithmetic progression $P$ of length at most $N$ such that $|A \cap P| \gg_K N$. 

A third possible inverse statement concerns the `dimension' of $A$, defined as the size of the largest dissociated subset of $A$. As noted in the introduction, Pichorides \cite{Pi80} has proved that if $\anorm{1_A}_1\leq K\log N$ then
\[\dim A \ll_K (\log N)^3.\]
In the other direction, there are examples where $\anorm{1_A}_1\ll \log N$ and yet $\dim(A) \gg (\log N)^2$ -- simply take the union of an arithmetic progression $P$ of size $\approx N$ and a dissociated set of size $\approx X = \lfloor (\log N)^2 \rfloor$, and use the triangle inequality together with the observation that
\[\anorm{1_X}_1\leq \abs{X}^{1/2}\ll \log N.\]
We conjecture that this is the best possible.
\begin{conjecture}
Let $K>0$ and $N$ be sufficiently large depending only on $K$. Let $A\subset \bbz$ be a finite set of size $N$. If $\anorm{1_A}_1\leq K\log N$ then $\dim A\ll_K (\log N)^2$. Furthermore, $A$ contains a subset of size $\gg_K N$ and dimension $\ll_K \log N$.
\end{conjecture}

\bibliographystyle{plain}
\bibliography{WeakInverse} 
\end{document}